\theoremstyle{plain}
\newtheorem{thm}{Theorem}[section]
\newtheorem{prop}[thm]{Proposition}
\newtheorem{lem}[thm]{Lemma}
\newtheorem{cor}[thm]{Corollary}
\theoremstyle{definition}
\newtheorem{exmp}[thm]{Example}
\newtheorem{rem}[thm]{Remark}
\newtheorem{dfns-rems}[thm]{Definitions and Remarks}
\newtheorem{notas-rems}[thm]{Notations and Remarks}
\newtheorem{exmps-rems}[thm]{Examples and Remarks}
\begin{document}
\bibliographystyle{amsplain}


\author[Amir Mousivand]{Amir Mousivand}

\address{A. Mousivand\\Department of Mathematics, Firoozkooh Branch, Islamic Azad
University (IAU), Firoozkooh, Iran.}
\email{amirmousivand@gmail.com\\amir.mousivand@iaufb.ac.ir}

\author[Seyed Amin Seyed Fakhari]{Seyed Amin Seyed Fakhari}

\address{S. A. Seyed Fakhari\\School of Mathematics, Institute for Research
in Fundamental Sciences (IPM), P.O. Box 19395-5746, Tehran, Iran.}
\email{fakhari@ipm.ir}
\urladdr{http://math.ipm.ac.ir/fakhari/}

\author[Siamak Yassemi]{Siamak Yassemi}

\address{S. Yassemi\\School of Mathematics, Statistics and Computer Science, College of
Science, University of Tehran, Tehran, Iran, and School of Mathematics, Institute
for Research in Fundamental Sciences (IPM), P.O. Box 19395-5746, Tehran, Iran.}
\email{yassemi@ipm.ir}
\urladdr{http://math.ipm.ac.ir/yassemi/}


\keywords{edge ideal, vertex decomposable graph, unmixed graph, sequentially Cohen--Macaulay graph, chordal graph}

\subjclass[2010]{Primary: 13H10, 05E45; Secondary:13F55, 05E40}

\thanks{The Research of A. Mousivand is partially supported by a grant from Firoozkooh Branch, Islamic Azad
University. The research of S. A. Seyed Fakhari and S. Yassemi were in part supported by a grant from IPM (No. 92130422) and (No. 921302140).}

\title{A new construction for Cohen-Macaulay graphs}

\begin{abstract}
Let $G$ be a finite simple graph on a vertex set $V(G)=\{x_{11}, \ldots, x_{n1}\}$. Also let $m_1, \ldots, ,m_n \geq 2$ be integers and $G_1, \ldots, G_n$ be connected simple graphs on the vertex sets $V(G_i)=\{x_{i1}, \ldots, x_{im_i}\}$. In this paper, we provide necessary and sufficient conditions on $G_1, \ldots, G_n$ for which the graph obtained by attaching $G_i$ to $G$ is unmixed or vertex decomposable. Then we characterize Cohen--Macaulay and sequentially Cohen--Macaulay graphs obtained by attaching the cycle graphs or connected chordal graphs to an arbitrary graphs.
\end{abstract}

\maketitle

\section{Introduction and preliminaries} \label{sec1}

Let $G$ be finite simple (undirected with no loops or multiple edges) graph on the vertex set $V(G)=\{x_1, \ldots, x_n\}$ whose edge set is $E(G)$. By identifying the vertex $x_i$ with the variable $x_i$ in the polynomial ring $R=\mathbb{K}[x_1, \ldots, x_n]$ over the field $\mathbb{K}$, one can associate an ideal to $G$ whose generators are square-free quadratic monomials $x_ix_j$ with $\{x_i,x_j\}\in E(G)$. This ideal is called the {\it edge ideal} of $G$ and will be denoted by $I(G)$. Also the {\it edge ring} of $G$, denoted by $\mathbb{K}[G]$, is defined to be the quotient ring $\mathbb{K}[G]=R/I(G)$. Edge ideals were first introduced by Villarreal \cite{V1}. Fr\"{o}berg in \cite{Fr} showed that Stanley-Reisner ideals with $2$-linear resolutions can be characterized graph-theoretically. Later the edge ideals were studied by many authors in order to examine their algebraic properties in terms of the combinatorial data of graphs, and vice versa. Among the many papers that have studied the properties of edge ideals, see \cite{Fr,HHKO,Ku,VT,VTV,W} and their references. We call a graph ({\it sequentially}) {\it Cohen--Macaulay} if its edge ring is a (sequentially) Cohen--Macaulay ring.

The {\it independence simplicial complex} of a graph $G$ is defined
by $$\Delta_G=\{A\subseteq V(G)\mid A \,\, \mbox{is an independent set
in}\,\, G\}.$$ We recall that $A\subseteq V(G)$ is an {\it independent set}
in $G$ if none of its elements are adjacent. Note that $\Delta_G$ is
precisely the simplicial complex with the Stanley--Reisner ideal $I(G)$. A graph is called {\it unmixed} if all its maximal independent sets have the same cardinality. It is known that any Cohen--Macaulay graph is unmixed (for example, see \cite[Lemma 9.1.10]{HH}). We call a graph $G$ {\it vertex decomposable} (resp. {\it shellable})  if the independence complex $\Delta_G$ is vertex decomposable (resp. {\it shellable}). Vertex decomposability were introduced in the pure case by Billera and Provan \cite{Bp} and extended to non-pure complexes by Bj\"{o}rner and Wachs \cite{BW}. We have the following implications (see \cite[Theorem 11.3]{BW} and \cite[corollary 8.2.19]{HH})
$$\mbox {vertex~decomposable}\Longrightarrow \mbox {shellable}\Longrightarrow \mbox {sequentially~Cohen--Macaulay}$$
and it is known that the above implications are strict.

In the present paper we consider the graph obtained by attaching a connected simple graph to each vertex of a graph $G$. In \cite[Proposition
2.2]{V1} Villarreal proved that the graph obtained from $G$ by adding a whisker to each vertex is
Cohen{Macaulay. Later Dochtermann and Engstr\"{o}m \cite[Theorem 4.4]{DE} proved that such a
graph is unmixed and vertex decomposable. Adding a whisker to each vertex is the
same as saying that attaching the complete graph $K_2$ to each vertex. Recently, Hibi, Higashitani, Kimura and O'Keefe give a generalization of this result by showing that the graph obtained by attaching a complete graph to each vertex of a graph $G$ is unmixed and vertex decomposable \cite[Theorem 1.1]{HHKO} (see also \cite{CN}). We generalize the above results as follows:

Let $G$ be a finite simple graph on a vertex set $V(G)=\{x_{11}, \ldots, x_{n1}\}$. Also let $m_1, \ldots, m_n \geq 2$ be integers and $G_1, \ldots, G_n$ be connected simple graphs on the vertex sets $V(G_i)=\{x_{i1}, \ldots, x_{im_i}\}$. We use $G(G_1, \ldots, G_n)$ to denote the graph obtained by attaching $G_i$ to $G$ on the vertex $x_{i1}$ for all $i=1, \ldots, n$. If $G(G_1, \ldots, G_n)$ is vertex decomposable, then $G_i$ is vertex decomposable for every $i=1, \ldots, n$. Furthermore the converse is true if $x_{i1}$ is a shedding vertex of the graph $G_i$, for every $i=1, \ldots, n$ (see Proposition \ref{thm:VD}).

In Section \ref{sec3} we study the unmixedness of $G(G_1,...,G_n)$. We show that the graph $G(G_1,...,G_n)$  is unmixed if and only if $G_i$ and $G_i\setminus\{x_{i1}\}$ are unmixed for every $i=1,...,n$ (see Proposition \ref{thm:UN}). Finally we characterize (sequentially) Cohen--Macaulay graphs of the form $G(G_1,...,G_n)$, where every $G_i$ is a cycle graph or a connected chordal graph with at least two vertices (see Theorems \ref{thm:MAIN1} and \ref{thm:MAIN2}).

\section{Vertex decomposability} \label{sec2}

Let $G$ be a simple graph on a vertex set $V(G)=\{x_{11}, \ldots, x_{n1}\}$. Also Let $m_1, \ldots, m_n\geq 2$ be integers and $G_1,\ldots, G_n$ be connected graphs on the vertex sets $V(G_i)=\{x_{i1}, \ldots, x_{im_i}\}$. We use $G(G_1, \ldots, G_n)$ to denote the graph obtained by attaching $G_i$ to $G$ on the vertex $x_{i1}$ for all $i=1, \ldots, n$. In this section we consider vertex decomposability of $G(G_1, \ldots, G_n)$. We first recall the definition of vertex decomposable simplicial complex. It is defined in terms of the deletion and the link of faces of a simplicial complex. For a face $F\in\Delta$, the {\it link} of $F$ is the simplicial complex
$${\rm link}_{\Delta} (F) = \{~G\in\Delta ~|~G\cap F =\emptyset~,~G\cup F \in \Delta~\}$$
while the {\it deletion} of $F$ is the simplicial complex
$${\rm del}_{\Delta} (F) = \{~G\in\Delta ~|~G\cap F =\emptyset~\}.$$
When $F=\{x\}$, we simply write ${\rm link}_{\Delta}(x)$ and ${\rm del}_{\Delta}(x)$. Also we usually use $\Delta\setminus \{x\}$ for ${\rm del}_{\Delta} (x)$.

A simplicial complex $\Delta$ is recursively defined to be {\it vertex decomposable} if it has only one facet (i.e. simplex), or else has some vertex $x$ such that

\begin{itemize}
\item [(1)] Both ${\rm link}_{\Delta}(x)$ and $\Delta\setminus \{x\}$ are vertex decomposable, and
\item [(2)] There is no face of ${\rm link}_{\Delta} \{x\}$ which is also a facet of $\Delta\setminus \{x\}$.
\end{itemize}

A {\it shedding vertex} is the vertex $x$  which satisfies the above conditions.

\begin{rem}

Our definition of shedding vertex is slightly different with the definition in \cite{W}, where a shedding vertex is the one which satisfies only condition (2).

\end{rem}

Let $G$ be a graph. For $W\subseteq V(G)$ , we denote by $G\setminus W$, the induced subgraph of $G$ on $V(G)\setminus W$. For a vertex $x\in V(G)$, let $N_G(x)$ denotes the neighborhood of $x$ in $G$, i.e., $N_G(x)=\{y\in V(G)~|~\{x,y\}\in E(G)\}$, and let $N_G[x] =\{x\}\cup N_G(x)$.  We call a graph $G$ vertex decomposable if the independence complex $\Delta_G$ is vertex decomposable. Therefore we have the following translation of vertex decomposable for graphs (see \cite [Section 2]{W}):

A graph $G$ is vertex decomposable if it is totally disconnected, or else has some vertex $x$ such that

\begin{itemize}
\item [(1)] Both $G\setminus N_G[x]$ and $G\setminus \{x\}$ are vertex decomposable, and
\item [(2)] For every independent set $S$ in $G\setminus N_G[x]$, there exists some $y\in N_G(x)$ such that $S\cup \{y\}$ is independent in $G\setminus \{x\}$.
\end{itemize}

A vertex $x$ satisfying the above conditions is called a shedding vertex for $G$.\\

The following result shows that a graph $G$ is vertex decomposable if and only if, each connected component of $G$ is vertex decomposable.

\begin{lem}
\cite[Lemma 20]{W} Let $G_1$ and $G_2$ be two graphs such that $V(G_1)\cap V(G_2)=\emptyset$, and set $G=G_1\cup G_2$.
Then $G$ is vertex decomposable if and only if $G_1$ and $G_2$ are vertex decomposable.
\end{lem}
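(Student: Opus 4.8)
The plan is to prove the statement of Lemma~1.2 (the last displayed statement in the excerpt, which is the cited result \cite[Lemma 20]{W}) by a straightforward induction argument using the recursive structure of vertex decomposability for graphs, together with the fact that the independence complex of a disjoint union of graphs is the join of the individual independence complexes. First I would record the basic observation that $\Delta_G = \Delta_{G_1} * \Delta_{G_2}$ when $V(G_1)\cap V(G_2)=\emptyset$, and that for a vertex $x\in V(G_1)$ we have ${\rm link}_{\Delta_G}(x) = {\rm link}_{\Delta_{G_1}}(x) * \Delta_{G_2}$ and $\Delta_G\setminus\{x\} = (\Delta_{G_1}\setminus\{x\}) * \Delta_{G_2}$; in graph terms, $G\setminus N_G[x] = (G_1\setminus N_{G_1}[x])\cup G_2$ and $G\setminus\{x\} = (G_1\setminus\{x\})\cup G_2$, since $x$ has no neighbors in $G_2$.

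For the backward direction, assume $G_1$ and $G_2$ are vertex decomposable; I would induct on $|V(G)|$. If $G$ is totally disconnected there is nothing to prove. Otherwise at least one of $G_1,G_2$ is not totally disconnected, say $G_1$; pick a shedding vertex $x$ of $G_1$. I claim $x$ is a shedding vertex of $G$. Condition (1): $G\setminus N_G[x] = (G_1\setminus N_{G_1}[x])\cup G_2$ is vertex decomposable because $G_1\setminus N_{G_1}[x]$ is vertex decomposable (as $x$ is a shedding vertex of $G_1$) and $G_2$ is vertex decomposable, so the inductive hypothesis applies (the vertex count has dropped); similarly $G\setminus\{x\} = (G_1\setminus\{x\})\cup G_2$ is vertex decomposable. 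Condition (2): given an independent set $S$ in $G\setminus N_G[x]$, write $S = S_1\sqcup S_2$ with $S_i\subseteq V(G_i)$; then $S_1$ is independent in $G_1\setminus N_{G_1}[x]$, so by the shedding property of $x$ in $G_1$ there is $y\in N_{G_1}(x)=N_G(x)$ with $S_1\cup\{y\}$ independent in $G_1\setminus\{x\}$, whence $S\cup\{y\}$ is independent in $G\setminus\{x\}$ (no edges run between the two components). Thus $G$ is vertex decomposable.

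For the forward direction, assume $G$ is vertex decomposable; again induct on $|V(G)|$. If $G$ is totally disconnected then so are $G_1$ and $G_2$ and we are done. Otherwise $G$ has a shedding vertex $x$, which lies in one of the components, say $x\in V(G_1)$. Using Lemma~1.2 applied to the \emph{smaller} graphs $G\setminus N_G[x] = (G_1\setminus N_{G_1}[x])\cup G_2$ and $G\setminus\{x\} = (G_1\setminus\{x\})\cup G_2$ (both vertex decomposable by condition (1), and both having fewer vertices), the inductive hypothesis gives that $G_1\setminus N_{G_1}[x]$, $G_1\setminus\{x\}$, and $G_2$ are all vertex decomposable. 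It remains to verify that $x$ is a shedding vertex of $G_1$: condition (1) for $G_1$ is exactly what we just obtained, and condition (2) for $G_1$ follows by restricting condition (2) for $G$ to independent sets $S_1\subseteq V(G_1\setminus N_{G_1}[x])$ — the witness $y\in N_G(x)$ necessarily lies in $N_{G_1}(x)$. Hence $G_1$ is vertex decomposable, and $G_2$ is too.

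The only mild subtlety — and the point I would be most careful about — is the bookkeeping in the induction: one must make sure the inductive hypothesis is invoked on strictly smaller graphs, and that in the forward direction the component containing the shedding vertex is handled separately from the other component (the other component $G_2$ is unchanged under both deletion and link operations at $x$, so its vertex decomposability is read off directly, while $G_1$ requires assembling conditions (1) and (2)). There is no real analytic obstacle here; the argument is purely combinatorial and the ``join'' identities for links and deletions do all the work.
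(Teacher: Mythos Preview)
The paper does not give its own proof of this lemma: it is simply quoted as \cite[Lemma~20]{W} and used as a black box. Your argument is a correct self-contained proof, carried out by induction on $|V(G)|$ and using the elementary identities $G\setminus\{x\}=(G_1\setminus\{x\})\cup G_2$ and $G\setminus N_G[x]=(G_1\setminus N_{G_1}[x])\cup G_2$ for $x\in V(G_1)$; both directions and the verification of the shedding conditions are handled properly, so there is nothing to compare and nothing to fix.
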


We are now ready to state and prove the first main result of this paper.

\begin{prop} \label{thm:VD}
Let $G$ be a simple graph on a vertex set $V(G)=\{x_{11}, \ldots, x_{n1}\}$.
Also Let $m_1, \ldots, m_n\geq 2$ be integers and $G_1, \ldots, G_n$ be connected graphs on the vertex sets $V(G_i)=\{x_{i1}, \ldots, x_{im_i}\}$. Then

\begin{itemize}
\item[(i)] If $G_1, \ldots, G_n$ are vertex decomposable and $x_{11}, \ldots, x_{n1}$ are shedding vertices of $G_1, \ldots, G_n$, respectively, then $G(G_1, \ldots, G_n)$ is vertex decomposable.
\item[(ii)] Conversely, if $G(G_1, \ldots, G_n)$ is vertex decomposable, then $G_1, \ldots, G_n$ are vertex decomposable.
\end{itemize}

\end{prop}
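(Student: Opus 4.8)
I would establish part (ii) first, as it is the more self-contained direction, and then deduce part (i) by induction on $n$.

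\emph{Part (ii).} Write $H=G(G_1,\dots,G_n)$ and fix $i$. The plan is to realise $G_i$ as $H\setminus N_H[F]$ for a suitable independent set $F$ of $H$, and then to invoke the well known fact that $H'\setminus N_{H'}[F']$ is vertex decomposable for every independent set $F'$ of a vertex decomposable graph $H'$: for a single vertex this is the statement that links of vertices in a vertex decomposable complex are vertex decomposable, and the general case follows by removing the closed neighbourhoods of the vertices of $F'$ in turn. To build $F$, for each $j\neq i$ choose a neighbour $y_j$ of $x_{j1}$ in $G_j$ (which exists, since $G_j$ is connected and $m_j\geq 2$) and extend $\{y_j\}$ to a maximal independent set $F_j$ of $G_j$; then $x_{j1}\notin F_j$, and maximality gives $N_{G_j}[F_j]=V(G_j)$. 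Put $F=\bigcup_{j\neq i}F_j$. Since the only edges of $H$ between distinct blocks $V(G_j)$ and $V(G_k)$ are $\{x_{j1},x_{k1}\}$ and $F$ contains no $x_{k1}$, the set $F$ is independent in $H$. Moreover $N_H[F]\supseteq N_{G_j}[F_j]=V(G_j)$ for every $j\neq i$, while no vertex of $V(G_i)$ lies in or is adjacent to $F$ (its $H$-neighbours lie in $V(G_i)$, together with possibly some $x_{k1}$, none of which meet $F$). Hence $N_H[F]=V(H)\setminus V(G_i)$, so $H\setminus N_H[F]=G_i$ and $G_i$ is vertex decomposable.

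\emph{Part (i).} I induct on $n$; for $n=1$ we have $G(G_1)=G_1$, vertex decomposable by hypothesis. Let $n\geq 2$, set $H=G(G_1,\dots,G_n)$ and $x=x_{n1}$, and check that $x$ is a shedding vertex of $H$. Deleting $x$ detaches $G_n\setminus\{x\}$ from the remainder, so $H\setminus\{x\}$ is the disjoint union of $\big(G\setminus\{x\}\big)(G_1,\dots,G_{n-1})$ and $G_n\setminus\{x\}$: the first is vertex decomposable by the induction hypothesis, the second because $x$ is a shedding vertex of $G_n$, so $H\setminus\{x\}$ is vertex decomposable by \cite[Lemma 20]{W}. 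Writing $A=\{\, j<n : x_{j1}\in N_G(x) \,\}$, deleting $N_H[x]$ also detaches each $G_j\setminus\{x_{j1}\}$ with $j\in A$, so $H\setminus N_H[x]$ is the disjoint union of $\big(G\setminus N_G[x]\big)\big(G_j : j<n,\, j\notin A\big)$ (the empty graph if this index set is vacuous), of the graphs $G_j\setminus\{x_{j1}\}$ with $j\in A$, and of $G_n\setminus N_{G_n}[x]$; each is vertex decomposable — the first by induction, the $G_j\setminus\{x_{j1}\}$ because $x_{j1}$ is a shedding vertex of $G_j$, and $G_n\setminus N_{G_n}[x]$ because $x$ is a shedding vertex of $G_n$ — so $H\setminus N_H[x]$ is vertex decomposable by \cite[Lemma 20]{W}. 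Finally, for shedding condition (2): given an independent set $S$ of $H\setminus N_H[x]$, its trace $S\cap V(G_n)$ is independent in $G_n\setminus N_{G_n}[x]$, so condition (2) for $x$ in $G_n$ furnishes $y\in N_{G_n}(x)$ with $(S\cap V(G_n))\cup\{y\}$ independent in $G_n\setminus\{x\}$; since the neighbours of $y$ in $H\setminus\{x\}$ all lie in $V(G_n)$ and $y\notin S$, the set $S\cup\{y\}$ is independent in $H\setminus\{x\}$ with $y\in N_H(x)$, as required.

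The bookkeeping part is verifying that $H\setminus\{x\}$ and $H\setminus N_H[x]$ split exactly as claimed into vertex-disjoint induced subgraphs with no edges joining the pieces, and that the auxiliary graphs appearing there are genuine instances of the construction to which the induction hypothesis applies. The one point I expect to be genuinely delicate is shedding condition (2) in part (i): it is here that one needs $x_{n1}$ to be a \emph{shedding} vertex of $G_n$ rather than merely that $G_n$ is vertex decomposable, and the reason the neighbour $y$ chosen inside $G_n$ still works in $H$ is that attaching $G_1,\dots,G_{n-1}$ enlarges the neighbourhood of no non-root vertex of $G_n$.
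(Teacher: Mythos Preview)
Your proof is correct and follows essentially the same approach as the paper: part (ii) is obtained by exhibiting $\Delta_{G_i}$ as the link in $\Delta_{G'}$ of a face $F=\bigcup_{j\neq i}F_j$ with each $F_j$ a maximal independent set of $G_j$ avoiding $x_{j1}$, and part (i) is the same induction on $n$ showing that the root vertex is a shedding vertex of $G(G_1,\dots,G_n)$ by decomposing $H\setminus\{x\}$ and $H\setminus N_H[x]$ into exactly the same disjoint pieces. The only cosmetic differences are your ordering of the two parts, your use of $x_{n1}$ rather than $x_{11}$, and your uniform treatment of the case $N_G(x)=\emptyset$ (which the paper separates out).
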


\begin{proof}
(i) We use induction on $n$. If $n=1$, then $G(G_1)=G_1$ and there is nothing to prove. Assume that $n>1$ and the assertion holds for any graph $G$ with at most $n-1$ vertices. We claim that $x_{11}$ is a shedding vertex of $G'=G(G_1, \ldots, G_n)$. To prove this claim, assume that $A$ is an independent set in $G'\setminus N_{G'}[x_{11}]$. We write $A$ as the disjoint union $A=B\cup C$, where $B=A\cap (V(G_2)\cup \ldots \cup V(G_n))$ and $C=A\cap V(G_1)$. Since $N_{G_1}[x_{11}]\subseteq N_{G'}[x_{11}]$, it follows that $C$ is an independent set in $G_1\setminus N_{G_1}[x_{11}]$, which together with the fact that $x_{11}$ is a shedding vertex of $G_1$ implies that there exists $x_{1j}\in N_{G_1}(x_{11})$ such that $C\cup\{x_{1j}\}$ is independent in $G_1\setminus \{x_{11}\}$. Note that for any $z\in V(G_2)\cup...\cup V(G_n)$ one has $\{x_{1j},z\}\notin E(G')$. Hence $A\cup\{x_{1j}\}$ is independent in $G'\setminus \{x_{11}\}$. Next we show that $G'\setminus \{x_{11}\}$ and $G'\setminus N_{G'}[x_{11}]$ are vertex decomposable. We write $G'\setminus \{x_{11}\}=G'_1\cup G'_2$, where $G'_1=G_1\setminus \{x_{11}\}$ and $G'_2=G\setminus\{x_{11}\}(G_2, \ldots, G_n)$. Then $G'_2$ is vertex decomposable by induction hypothesis and $G'_1$ is vertex decomposable since $x_{11}$ is a shedding vertex for $G_1$ and $G_1$ is vertex decomposable. Therefore $G'\setminus \{x_{11}\}$ is vertex decomposable, since its connected components are vertex decomposable. Now consider $G'\setminus N_{G'}[x_{11}]$. If $N_G(x_{11})=\emptyset$,  Then $x_{11}$ is isolated in $G$ and hence $G'$ is the disjoint union of $G_1$ and $(G\setminus \{x_{11}\})(G_2, \ldots, G_n)$. It is obvious that $G'$ is vertex decomposable, since by induction hypothesis its connected components are vertex decomposable. So suppose $N_G(x_{11})\neq \emptyset$ and by relabeling the vertices of $G$, assume that $N_G(x_{11})=\{x_{21}, \ldots, x_{t1}\}$ where $t\leq n$. It is easy to see that $G'\setminus N_{G'}[x_{11}]$ is the disjoint union of the following graphs:

\begin{itemize}
\item[(i)] $G_1\setminus N_{G_1}[x_{11}]$;
\item[(ii)] $G_i\setminus \{x_{i1}\}$ for $i=2, \ldots, t$;
\item[(iii)] $(G\setminus \{x_{11}, \ldots, x_{t1}\})(G_{t+1}, \ldots, G_n)$.
\end{itemize}

Clearly, the graphs in (i) and (ii) are vertex decomposable, since $x_{i1}$ is shedding vertex for $G_i$, and $G_i$ is vertex decomposable for every $i=1, \ldots, n$. Moreover,
by the induction hypothesis, the graph in (iii) is also vertex decomposable. Thus $G'\setminus N_{G'}[x_{11}]$ is vertex decomposable, as required.

(ii) By symmetry it is enough to show that $G_1$ is vertex decomposable. For every $j\geq 2$, let $F_j$ be a facet of $\Delta_{G_j}$ with $x_{j_1}\notin F_j$. It follows that $F=\cup_{j=2}^{n}F_j$ is a face of $\Delta_{G'}$. Since $\Delta_{G'}$ is vertex decomposable, $\rm{link}_{\Delta_{G'}}(F)$ is vertex decomposable. On the other hand, one has $\rm{link}_{\Delta_{G'}}(F)=\Delta_{G_1}$. Hence $\Delta_{G_1}$ is vertex decomposable, i.e., $G_1$ is vertex decomposable.
\end{proof}

\begin{rem} \label{link}
A similar argument as in the proof of the second part of Proposition \ref{thm:VD} shows that if $G(G_1, \ldots, G_n)$ is shellable, sequentially Cohen-Macaulay or Cohen--Macaulay, then $G_1, \ldots, G_n$ are shellable, sequentially Cohen--Macaulay or Cohen--Macaulay, respectively. But the converse is not (in general) true, as the following example shows.
\end{rem}

\begin{exmp}
If $G_1, \ldots, G_n$ are vertex decomposable and we attach each $G_i$ to $G$ in a non-shedding vertex, then $G(G_1, \ldots, G_n)$ may not (in general) be vertex decomposable. For example, the graph $G'$ obtained by attaching $P_2$, the path of length $2$, in a vertex of degree $1$ to every vertex of the cycle of length $4$ (Figure 1) is not vertex decomposable. Because the set of the vertices of degree one of $G'$ is a face of $\Delta_{G'}$ whose link is the independence complex of the cycle of length $4$ which is not vertex decomposable.

\unitlength 1mm 
\linethickness{0.8pt}
\ifx\plotpoint\undefined\newsavebox{\plotpoint}\fi 
\begin{picture}(30,60)(35,-20)

\put(81,20){\line(1,0){10}}

\put(81,10){\line(1,0){10}}

\put(81,20){\line(0,-1){10}}

\put(91,20){\line(0,-1){10}}

\put(81,20){\circle*{1}}
\put(81,10){\circle*{1}}
\put(91,19.9){\circle*{1}}
\put(91,10){\circle*{1}}

\put(71,20){\line(1,0){10}}

\put(71,30){\line(0,-1){10}}

\put(71,30){\circle*{1}}
\put(71,19.9){\circle*{1}}
\put(81,20){\circle*{1}}

\put(91,20){\line(1,0){10}}

\put(101,30){\line(0,-1){10}}

\put(91,20){\circle*{1}}
\put(101,19.9){\circle*{1}}
\put(101,30){\circle*{1}}

\put(71,10){\line(1,0){10}}

\put(71,10){\line(0,-1){10}}

\put(71,10){\circle*{1}}
\put(71,0){\circle*{1}}
\put(81,9.9){\circle*{1}}

\put(91,10){\line(1,0){10}}

\put(101,10){\line(0,-1){10}}

\put(91,10){\circle*{1}}
\put(101,9.9){\circle*{1}}
\put(101,0){\circle*{1}}

\put(80,-10){\bf{Figure 1} }
\end{picture}

Note that the above graph is neither shellable nor sequentially Cohen--Macaulay. In addition, the graph obtained by attaching $P_3$, the path of length $3$, in a vertex of degree $1$ to every vertex of the cycle of length $4$ is not Cohen--Macaulay.
\end{exmp}

A graph is called {\it chordal} if every cycle of
length at least four has a chord. We recall that a {\it chord} of a cycle
is an edge which joins two vertices of the cycle but is not itself an edge
of the cycle. By Dirac's theorem \cite{D} every chordal graph has a {\it simplicial} vertex, i.e., a vertex whose
neighbors form a clique. Woodreefe \cite[Corollary 7]{W} proved that a chordal graph is vertex decomposable and every neighbor of a simplicial vertex is a shedding vertex. Let $C_n$ denotes the cycle of length $n$. Francisco and Van Tuyl in \cite[Theorem 4.1]{FVT} showed that $C_n$ is vertex decomposable (shellable or sequentially Cohen--Macaulay) if and only if $n\in\{3,5\}$. Combining these facts with Proposition \ref{thm:VD}, we conclude the following corollaries.

\begin{cor} \label{cor:KCPVD}
Let $G$ be a finite simple graph. Then the graph $G'$ obtained by attaching one of the following graphs to each vertex of $G$ is vertex decomposable (and so shellable and sequentially Cohen--Macaulay):
\begin{itemize}
\item[(1)] a chordal graph with at least two vertices attached to $G$ in a neighbor of a simplicial vertex;
\item[(2)] $C_5$, the cycle of length 5.
\end{itemize}
\end{cor}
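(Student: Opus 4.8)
The plan is to deduce both parts directly from Proposition \ref{thm:VD}(i). That proposition produces vertex decomposability of $G(G_1,\ldots,G_n)$ as soon as each $G_i$ is vertex decomposable and the gluing vertex $x_{i1}$ is a shedding vertex of $G_i$. So the whole corollary reduces to one local check: whenever $H$ is an admissible attached graph and $v$ is the vertex of $H$ along which it is glued to $G$, then $H$ is vertex decomposable and $v$ is a shedding vertex of $H$. Granting this, I would set $G'=G(G_1,\ldots,G_n)$ with each pair $(G_i,x_{i1})$ of the prescribed form, invoke Proposition \ref{thm:VD}(i) to conclude that $G'$ is vertex decomposable, and then quote the implications recorded in Section \ref{sec1} to obtain ``shellable'' and ``sequentially Cohen--Macaulay'' for free.

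For part (1), let $H$ be a connected chordal graph with $|V(H)|\geq 2$. By Dirac's theorem \cite{D} it has a simplicial vertex $u$, and by connectivity together with $|V(H)|\geq 2$ the vertex $u$ has a neighbour $v$. Then $H$ is vertex decomposable and $v$ is a shedding vertex of $H$, both by Woodroofe's result \cite[Corollary 7]{W} (a chordal graph is vertex decomposable and every neighbour of a simplicial vertex is a shedding vertex). Hence the local check holds for $(H,v)$, and part (1) follows.

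For part (2), with $H=C_5$, I would first recall from \cite[Theorem 4.1]{FVT} that $C_5$ is vertex decomposable. It remains to see that every vertex of $C_5$ is a shedding vertex; since $C_5$ is vertex transitive and any graph automorphism preserves links, deletions and independence, it suffices to verify this for one vertex. Writing $V(C_5)=\{y_1,\ldots,y_5\}$ with edges $\{y_i,y_{i+1}\}$ (indices modulo $5$) and taking $y_1$: the graph $C_5\setminus N_{C_5}[y_1]$ is the single edge on $\{y_3,y_4\}$ and $C_5\setminus\{y_1\}$ is the path $y_2y_3y_4y_5$, both chordal hence vertex decomposable; and condition (2) is confirmed by running through the independent sets $\emptyset,\{y_3\},\{y_4\}$ of $C_5\setminus N_{C_5}[y_1]$ and noting that $\{y_2\}$, $\{y_3,y_5\}$ and $\{y_2,y_4\}$ are independent in the path $C_5\setminus\{y_1\}$. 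Thus $y_1$, and therefore every vertex of $C_5$, is a shedding vertex, so the local check holds and Proposition \ref{thm:VD}(i) applies.

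There is no genuinely hard step: the corollary is essentially a repackaging of Proposition \ref{thm:VD}(i) together with two known facts, Woodroofe's theorem on chordal graphs and the Francisco--Van Tuyl result on $C_5$. The only point that requires an actual (very short) computation is the verification that the vertices of $C_5$ are shedding vertices, because $C_5$ is not chordal and Woodroofe's simplicial-vertex criterion does not apply to it; the vertex-transitivity reduction confines even that to a single one-vertex check.
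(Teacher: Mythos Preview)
Your proposal is correct and follows exactly the route the paper indicates: the corollary is stated immediately after recalling Woodroofe's result on chordal graphs and the Francisco--Van Tuyl result on $C_5$, with the sentence ``Combining these facts with Proposition \ref{thm:VD}, we conclude the following corollaries.'' Your write-up simply makes explicit the one detail the paper leaves implicit, namely that every vertex of $C_5$ is a shedding vertex (the vertex-transitivity reduction plus the one-vertex check), which is indeed needed to invoke Proposition \ref{thm:VD}(i) for part (2).
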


\begin{cor} \label{cor:CVD}
Let $G$ be a simple graph and Let $m_1, \ldots, m_n\geq 3$ be integers. Then $G(C_{m_1}, \ldots, C_{m_n})$ is vertex decomposable (shellable or sequentially Cohen--Macaulay) if and only if $m_i\in\{3,5\}$ for all $i=1, \ldots, n$.
\end{cor}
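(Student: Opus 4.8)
The plan is to obtain this as a formal consequence of Proposition \ref{thm:VD}, Remark \ref{link}, and the classification of (sequentially) Cohen--Macaulay cycles of Francisco and Van Tuyl \cite[Theorem 4.1]{FVT}, namely that $C_m$ is vertex decomposable (equivalently shellable, equivalently sequentially Cohen--Macaulay) precisely when $m\in\{3,5\}$. Since vertex decomposability implies shellability implies sequential Cohen--Macaulayness (Section \ref{sec1}), it suffices to prove the implication ``$m_i\in\{3,5\}$ for all $i$ $\Rightarrow$ $G(C_{m_1},\ldots,C_{m_n})$ is vertex decomposable'' together with the implication ``$G(C_{m_1},\ldots,C_{m_n})$ is sequentially Cohen--Macaulay $\Rightarrow$ $m_i\in\{3,5\}$ for all $i$''.

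For the first implication, assume each $m_i\in\{3,5\}$. Then each $C_{m_i}$ is vertex decomposable by \cite[Theorem 4.1]{FVT}, so by Proposition \ref{thm:VD}(i) it is enough to check that $x_{i1}$ is a shedding vertex of $C_{m_i}$; by vertex-transitivity of a cycle it is enough to check this for \emph{some} vertex of $C_3$ and of $C_5$. For $C_3=K_3$ this is immediate: $K_3$ is chordal, every vertex is simplicial, and a neighbor of a simplicial vertex is a shedding vertex by \cite[Corollary 7]{W}. For $C_5$ one verifies conditions (1) and (2) at a vertex $x$ directly: $C_5\setminus N_{C_5}[x]$ is a single edge and $C_5\setminus\{x\}$ is a path on four vertices, both vertex decomposable (being chordal), so (1) holds; and (2) follows by inspecting the only three independent sets of the edge $C_5\setminus N_{C_5}[x]$ (the empty set and its two singletons) and, for each, exhibiting a vertex of $N_{C_5}(x)$ that can be adjoined to it to stay independent in $C_5\setminus\{x\}$. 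Hence every $C_{m_i}$ is vertex decomposable with $x_{i1}$ a shedding vertex, and Proposition \ref{thm:VD}(i) gives that $G(C_{m_1},\ldots,C_{m_n})$ is vertex decomposable, so also shellable and sequentially Cohen--Macaulay.

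Conversely, suppose $G(C_{m_1},\ldots,C_{m_n})$ is sequentially Cohen--Macaulay (this covers the vertex decomposable and shellable cases as well, by the implications of Section \ref{sec1}). By Remark \ref{link}, each $C_{m_i}$ is then sequentially Cohen--Macaulay, so $m_i\in\{3,5\}$ by \cite[Theorem 4.1]{FVT}; for the ``vertex decomposable'' version one may alternatively invoke Proposition \ref{thm:VD}(ii) in place of Remark \ref{link}. This completes the proof. I expect no real obstacle here beyond the elementary shedding-vertex verification for $C_5$, which is already implicit in the proof of Corollary \ref{cor:KCPVD}(2); the only genuinely new feature of the statement over Corollary \ref{cor:KCPVD} is that the attached cycles may have different lengths, and this is exactly what the ``for all $i$'' formulation of Proposition \ref{thm:VD} is designed to handle.
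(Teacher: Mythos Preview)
Your argument is correct and follows exactly the approach the paper intends: the corollary is stated without a separate proof and is meant to follow from Proposition \ref{thm:VD}, Remark \ref{link}, and the Francisco--Van Tuyl classification \cite[Theorem 4.1]{FVT}, together with the observation (implicit in Corollary \ref{cor:KCPVD}(2)) that every vertex of $C_3$ or $C_5$ is a shedding vertex. Your explicit verification of the shedding-vertex condition for $C_5$ fills in the one detail the paper leaves to the reader.
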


\begin{rem}
Woodroofe \cite [Theorem 1]{W} showed that if $G$ is a graph with no chordless cycles of length other than $3$ or $5$, then $G$ is vertex decomposable. For every integer $t\geq 1$ one can construct a vertex decomposable graph which contains $t$ chordless cycles of length other than $3$ or $5$; it is enough to use Proposition \ref{thm:VD} and choose $G$ such that it contains $t$ number of such cycles.
\end{rem}

\section{Unmixedness} \label{sec3}

In this section we investigate the unmixedness of $G(G_1, \ldots, G_n)$. The following example shows that $G(G_1, \ldots, G_n)$ is not, in general, unmixed even if $G$ and $G_1, \ldots, G_n$ are unmixed graphs.

\begin{exmp}

The graph $H=C_4(C_4,C_4,C_4,C_4)$ is not unmixed. To see this, it is enough to observe that, by labeling the vertices as in Figure 2, the following sets are  maximal independent sets for $H$:
\begin{itemize}
\item[(1)] $\{x_{11},x_{13},x_{31},x_{33},x_{23},x_{43}\}$,
\item[(2)] $\{x_{11},x_{13},x_{31},x_{33},x_{22},x_{24},x_{42},x_{44}\}$.
\end{itemize}

\unitlength 1mm 
\linethickness{0.8pt}
\ifx\plotpoint\undefined\newsavebox{\plotpoint}\fi 
\begin{picture}(30,60)(35,-20)

\put(81,20){\line(1,0){10}}
\put(81,10){\line(1,0){10}}
\put(81,20){\line(0,-1){10}}
\put(91,20){\line(0,-1){10}}
\put(81,20){\circle*{1}}
\put(81,10){\circle*{1}}
\put(91,19.9){\circle*{1}}
\put(91,10){\circle*{1}}
\put(71,30){\line(1,0){10}}
\put(71,20){\line(1,0){10}}
\put(71,30){\line(0,-1){10}}
\put(81,30){\line(0,-1){10}}
\put(71,30){\circle*{1}}
\put(81,30){\circle*{1}}
\put(71,19.9){\circle*{1}}
\put(81,20){\circle*{1}}
\put(91,30){\line(1,0){10}}
\put(91,20){\line(1,0){10}}
\put(91,30){\line(0,-1){10}}
\put(101,30){\line(0,-1){10}}
\put(91,20){\circle*{1}}
\put(91,30){\circle*{1}}
\put(101,19.9){\circle*{1}}
\put(101,30){\circle*{1}}
\put(71,10){\line(1,0){10}}
\put(71,0){\line(1,0){10}}
\put(71,10){\line(0,-1){10}}
\put(81,10){\line(0,-1){10}}
\put(71,10){\circle*{1}}
\put(71,0){\circle*{1}}
\put(81,9.9){\circle*{1}}
\put(81,0){\circle*{1}}
\put(91,10){\line(1,0){10}}
\put(91,0){\line(1,0){10}}
\put(91,10){\line(0,-1){10}}
\put(101,10){\line(0,-1){10}}
\put(91,10){\circle*{1}}
\put(91,0){\circle*{1}}
\put(101,9.9){\circle*{1}}
\put(101,0){\circle*{1}}

\put(92,18){$_{x_{_{11}}}$}
\put(100,18){$_{x_{_{12}}}$}
\put(100,32){$_{x_{_{13}}}$}
\put(90,32){$_{x_{_{14}}}$}

\put(82,18){$_{x_{_{21}}}$}
\put(80,32){$_{x_{_{22}}}$}
\put(70,32){$_{x_{_{23}}}$}
\put(70,18){$_{x_{_{24}}}$}

\put(82,12){$_{x_{_{31}}}$}
\put(70,12){$_{x_{_{32}}}$}
\put(70,-2){$_{x_{_{33}}}$}
\put(80,-2){$_{x_{_{34}}}$}

\put(92,12){$_{x_{_{41}}}$}
\put(90,-2){$_{x_{_{42}}}$}
\put(100,-2){$_{x_{_{43}}}$}
\put(100,12){$_{x_{_{44}}}$}

\put(80,-10){\bf{Figure 2}}
\end{picture}

\end{exmp}

Note that the idea of the above example may be applied to show that $C_n(\overbrace{C_4,C_4,\ldots,C_4}^{n-times})$ is not unmixed for all $n\geq 3$, but we prove the next more general result. Recall that the {\it independence number} of a graph $G$, denoted by $\alpha(G)$, is the greatest integer $c$ such that $G$ has a maximal independent set of cardinality $c$.

\begin{prop} \label{thm:UN}
Let $G$ be a simple graph on a vertex set $V(G)=\{x_{11}, \ldots, x_{n1}\}$ and suppose that $G$ has no isolated vertex. Assume that $G_1, \ldots, G_n$ are connected graphs on the vertex sets $V(G_i)=\{x_{i1}, \ldots, x_{im_i}\}$, such that $m_i\geq 2$, for every $i=1, \ldots, n$. Then $G(G_1, \ldots, G_n)$ is unmixed if and only if the graphs $G_i$ and $G_i\setminus\{x_{i1}\}$ are unmixed, for every $i=1, \ldots, n$.
\end{prop}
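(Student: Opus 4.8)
The statement says $G(G_1,\ldots,G_n)$ is unmixed iff each $G_i$ and each $G_i\setminus\{x_{i1}\}$ is unmixed. I would attack this by understanding precisely what the maximal independent sets of $G' := G(G_1,\ldots,G_n)$ look like. Write $H_i := G_i\setminus\{x_{i1}\}$. The key structural observation is that the only edges connecting different "lobes" $V(G_i)$ and $V(G_j)$ are the edges of $G$ itself, all of which lie among the $x_{i1}$'s. So if $A$ is a maximal independent set of $G'$, then $A\cap V(G)$ is an independent set of $G$, and for each $i$ with $x_{i1}\in A$ we have $A\cap V(G_i) = \{x_{i1}\}\cup B_i$ where $B_i$ is a maximal independent set of $G_i\setminus N_{G_i}[x_{i1}]$ (equivalently, a maximal independent set of $H_i$ that contains no neighbor of $x_{i1}$ — but by maximality of $A$ and the fact that the only way to "use" $x_{i1}$'s exclusion is internal to $G_i$, I should be careful: $B_i$ is actually a maximal independent set of $H_i$ avoiding $N_{G_i}(x_{i1})$, i.e. a maximal independent set of $G_i\setminus N_{G_i}[x_{i1}]$), while for each $i$ with $x_{i1}\notin A$ we have $A\cap V(G_i)$ is a maximal independent set of $H_i$. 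Thus
\[
|A| = \sum_{i\,:\,x_{i1}\in A}\bigl(1 + \alpha_0(G_i)\bigr) \;+\; \sum_{i\,:\,x_{i1}\notin A}\alpha_0(H_i),
\]
where I would define, for an unmixed-or-not graph, the relevant "maximal-independent-set size" quantities, and where (crucially) the index set $\{i : x_{i1}\in A\}$ ranges exactly over the independent sets of $G$ as $A$ ranges over maximal independent sets of $G'$ (here "$G$ has no isolated vertex" enters to guarantee that no $x_{i1}$ is forced in or out for trivial reasons, and that the above decomposition genuinely corresponds to a choice of independent set in $G$).

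\textbf{The "if" direction.} Suppose every $G_i$ and every $H_i$ is unmixed, with $|F| = a_i$ for every maximal independent set $F$ of $G_i$ and $|F'| = b_i$ for every maximal independent set $F'$ of $H_i$. First I would prove two numerical facts. (a) $1 + \alpha(G_i\setminus N_{G_i}[x_{i1}]) = a_i$: indeed if $F$ is a maximal independent set of $G_i$ containing $x_{i1}$, then $F\setminus\{x_{i1}\}$ is a maximal independent set of $G_i\setminus N_{G_i}[x_{i1}]$ and conversely, so this follows once we know such an $F$ exists — it does, since $x_{i1}$ lies in some maximal independent set. (So in fact $\alpha(G_i\setminus N_{G_i}[x_{i1}])$ should not just be the max but constant, $= a_i - 1$; this needs $G_i$ unmixed and a short argument.) (b) $b_i \geq a_i - 1$: a maximal independent set of $H_i$ that happens to avoid $N_{G_i}(x_{i1})$ is a maximal independent set of $G_i\setminus N_{G_i}[x_{i1}]$, hence has size $a_i - 1$; a general maximal independent set of $H_i$ has size $b_i$, and $\ge$ any such. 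Hmm — I actually want equality $b_i = a_i$ or the relation that makes the sum constant. Let me reorganize: the cardinality of $A$ is $\sum_{x_{i1}\in A} a_i + \sum_{x_{i1}\notin A} b_i = \sum_{i=1}^n b_i + \sum_{x_{i1}\in A}(a_i - b_i)$. For $|A|$ to be independent of $A$, since $\{i : x_{i1}\in A\}$ ranges over all maximal independent sets of $G$, it suffices that $a_i = b_i$ for all $i$ (making $|A| = \sum b_i$ always). So the real content of the "if" direction is: \emph{$G_i$ and $H_i$ unmixed $\Rightarrow a_i = b_i$.} One inequality: extend a size-$b_i$ maximal independent set of $H_i$ to a maximal independent set of $G_i$; if it doesn't contain $x_{i1}$ it already was maximal in $G_i$ so $b_i = a_i$; if adding $x_{i1}$ is possible then the original avoided $N_{G_i}(x_{i1})$, so it was a maximal independent set of $G_i\setminus N_{G_i}[x_{i1}]$, giving $b_i = a_i - 1$ — but then the maximal independent set of $G_i$ obtained has size $b_i + 1 = a_i$, consistent. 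The other direction: take a maximal independent set $F$ of $G_i$ of size $a_i$ not containing $x_{i1}$ (exists if $x_{i1}$ is not in every maximal independent set; if it IS in every one, then $H_i = G_i\setminus\{x_{i1}\}$ and one checks maximal independent sets of $H_i$ are exactly $F\setminus\{x_{i1}\}$, all of size $a_i - 1$, forcing $b_i = a_i - 1$, and then $|A| = \sum_{x_{i1}\in A}a_i + \sum_{x_{i1}\notin A}(a_i - 1)$; now since $G$ has no isolated vertices this is $\sum a_i - |V(G)\setminus A\cap V(G)\text{-stuff}|$... this is getting delicate). I would push this through carefully, and this bookkeeping — matching up $a_i$ vs $b_i$ and tracking which $x_{i1}$'s lie in the chosen independent set of $G$ — is the \textbf{main obstacle}; the right invariant to prove is that $|A| = \sum_{i=1}^n c_i$ is constant, where $c_i$ is the common size of maximal independent sets of $G_i$, using that whenever $x_{i1}\notin A$ the contribution is still $c_i$ because a maximal independent set of $H_i$ either extends to one of $G_i$ of the same size (if $x_{i1}$ can't be added) or was already size $c_i - 1$ but then $x_{i1}$ IS in $A$ in the competing set — no. The clean path: show \emph{directly} that for each $i$, whether or not $x_{i1}\in A$, the block $A\cap V(G_i)$ has size exactly $a_i$ when $x_{i1}\in A$ and exactly $b_i$ when $x_{i1}\notin A$, and that $a_i = b_i$; I'll devote the bulk of the argument to $a_i = b_i$ via the two cases "$x_{i1}$ is in some maximal independent set of $G_i$ avoiding $N_{G_i}(x_{i1})$" versus not.

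\textbf{The "only if" direction.} Suppose $G' = G(G_1,\ldots,G_n)$ is unmixed. Fix $i$. To see $G_i$ is unmixed: given a maximal independent set $F$ of $G_i$, I build a maximal independent set of $G'$ whose block over $V(G_i)$ is $F$ and whose other blocks are fixed once and for all. Concretely, pick an independent set $T$ of $G$ with $x_{i1}\in T$ if and only if $x_{i1}\in F$ — possible since $G$ has no isolated vertex, so $\{x_{i1}\}$ (if $x_{i1}\in F$) or $N_G(x_{i1})$ minus... — choose $T$ maximal independent in $G$ subject to the constraint on $x_{i1}$; then for $j\ne i$ with $x_{j1}\in T$ pick a maximal independent set of $G_j\setminus N_{G_j}[x_{j1}]$ together with $x_{j1}$, and for $j\ne i$ with $x_{j1}\notin T$ pick a maximal independent set of $H_j$; the union with $F$ is a maximal independent set of $G'$. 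Varying $F$ over all maximal independent sets of $G_i$ while keeping everything else fixed (for a fixed choice of "$x_{i1}\in F$ or not" — and handle the two cases), unmixedness of $G'$ forces all such $F$ to have the same size, hence $G_i$ unmixed. The same construction with $G_i$ replaced by $H_i$ (i.e. forcing $x_{i1}\notin A$ and letting the block over $V(H_i)$ be an arbitrary maximal independent set of $H_i$) gives $H_i$ unmixed. The one point requiring care is the existence of the auxiliary independent set $T$ of $G$ realizing the prescribed membership of $x_{i1}$ — this is exactly where the hypothesis "$G$ has no isolated vertex" is used, since if $x_{i1}$ were isolated it would lie in every maximal independent set of $G$ and one could not force $x_{i1}\notin A$.
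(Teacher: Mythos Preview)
Your overall decomposition of maximal independent sets of $G'$ into blocks $A\cap V(G_i)$ is correct and matches the paper's strategy. The genuine gap is in the ``if'' direction: you never establish $a_i = b_i$, and in fact you entertain the impossible case ``$x_{i1}$ lies in every maximal independent set of $G_i$'' and get stuck there. That case cannot occur: since $G_i$ is connected with $m_i\geq 2$, the vertex $x_{i1}$ has a neighbour $y$ in $G_i$, and any maximal independent set of $G_i$ containing $y$ omits $x_{i1}$. Once you have a maximal independent set $F$ of $G_i$ with $x_{i1}\notin F$, it is automatically a maximal independent set of $H_i = G_i\setminus\{x_{i1}\}$ as well (anything you could add to $F$ inside $H_i$ you could also add inside $G_i$). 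Hence $a_i = |F| = b_i$ in one line, and your formula gives $|A| = \sum_i a_i$ for every maximal independent set $A$ of $G'$. This is exactly the paper's argument for the ``if'' direction.

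For the ``only if'' direction your plan is workable but more tangled than needed, and you leave open how to compare the two cases $x_{i1}\in F$ versus $x_{i1}\notin F$. The paper sidesteps this entirely: for each $j\neq 1$ choose a facet $F_j$ of $\Delta_{G_j}$ with $x_{j1}\notin F_j$ (possible by the same neighbour argument as above), set $F = \bigcup_{j\geq 2}F_j$, and observe that $\mathrm{link}_{\Delta_{G'}}(F) = \Delta_{G_1}$; purity of $\Delta_{G'}$ then gives purity of $\Delta_{G_1}$ directly, with no case split. For $H_1$ unmixed the paper uses the no-isolated-vertex hypothesis exactly as you do: pick $x_{k1}\in N_G(x_{11})$, fix a maximal independent set $A$ of $(G\setminus\{x_{11}\})(G_2,\ldots,G_n)$ containing $x_{k1}$, and note that $A\cup B$ is maximal independent in $G'$ for every maximal independent set $B$ of $H_1$.
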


\begin{proof}
First assume that $G'=G(G_1, \ldots, G_n)$ is unmixed. By symmetry it is enough to show that $G_1$ and $G_1\setminus\{x_{11}\}$ are unmixed. For every $j\geq 2$, let $F_j$ be a facet of $\Delta_{G_j}$ with $x_{j_1}\notin F_j$. It follows that $F=\cup_{j=2}^{n}F_j$ is a face of $\Delta_{G'}$. Since $\Delta_{G'}$ is pure, $\rm{link}_{\Delta_{G'}}(F)$ is pure. On the other hand, one has $\rm{link}_{\Delta_{G'}}(F)=\Delta_{G_1}$. Hence $\Delta_{G_1}$ is pure, i.e., $G_1$ unmixed.

Next we show that $G_1\setminus\{x_{11}\}$ is unmixed. Suppose on the contrary that $G_1\setminus\{x_{11}\}$ has two maximal independent sets $B$ and $C$ with different cardinalities. Since $G$ has no isolated vertex, we can choose a vertex $x_{i1}\in N_G(x_{11})$, for some integer $2\leq i\leq n$. Let $A$ be a maximal independent set of $G\setminus\{x_{11}\}(G_2, \ldots, G_n)$ which contains $x_{i1}$. One can easily see that $A\cup B$ and $A\cup C$ are maximal independent sets of $G'$ with different cardinalities, which is impossible.

Conversely, assume that for every $i=1, \ldots, n$ the graphs $G_i$ and $G_i\setminus\{x_{i1}\}$ are unmixed. We show that the cardinality of every maximal independent set of $G'$ is equal to $\alpha(G_1)+ \ldots+ \alpha(G_n)$ an this proves that $G'$ is unmixed. We note that since $x_{i1}$ is not an isolated vertex of $G_i$, for every $i=1, \ldots, n$, the graph $G_i$ has a maximal independent set which does not contain $x_{i1}$. Since $G_i$ is unmixed, we conclude that $\alpha(G_i\setminus\{x_{i1}\})=\alpha(G_i)$, for every $i=1, \ldots, n$. Let $A$ be a maximal independent set of $G'$. If $x_{i1}\in A$, for some $1\leq i\leq n$, then $A\cap V(G_i)$ is a maximal independent set of $G_i$ and so its cardinality is equal to $\alpha(G_i)$. Thus the cardinality of $A\cap V(G_i\setminus\{x_{i1}\})$ is equal to $\alpha(G_i)-1$. On the other hand if $x_{i1} \notin A$, for some $1\leq i\leq n$, then again $A\cap V(G_i)$ is a maximal independent set of $G_i$. But in this case we conclude that the cardinality of $A\cap V(G_i\setminus\{x_{i1}\})$ is equal to $\alpha(G_i)$. Note that $$A=\dot{\bigcup}_{i=1}^n(A\cap V(G_i\setminus\{x_{i1}\}))\dot{\cup} (A\cap \{x_{11}, \ldots, x_{n1}\}.$$ Now summing the cardinalities complete the proof.
\end{proof}

\begin{rem}
Let $G$ be a simple graph on the vertex set $V(G)=\{x_{11}, \ldots, x_{n1}\}$ such that $x_{11}, \ldots, x_{t1}$ ($t\leq n$) are all the isolated vertices of $G$. Then Proposition \ref{thm:UN} shows that $G(G_1, \ldots, G_n)$ is unmixed if and only if $G_i$ is unmixed for every $i=1, \ldots, n$ and $G_i\setminus\{x_{i1}\}$ is unmixed for every $i=t+1, \ldots, n$.
\end{rem}

In the following proposition we restrict ourselves to the family of chordal graphs and prove that unmixedness of these graphs is preserved if we delete a suitable vertex.

\begin{prop} \label{prop:CHORD}
Let $G$ be a unmixed chordal graph and $x$ be a simplicial vertex of $G$. Then $G\setminus \{y\}$ is unmixed, for every $y\in N_G(x)$.
\end{prop}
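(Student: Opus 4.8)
The plan is to show that for any maximal independent set $D$ of $G' := G \setminus \{y\}$, the cardinality $|D|$ equals $\alpha(G)$; since $G$ is unmixed, this forces $G'$ to be unmixed as well. The key point is that deleting the neighbor $y$ of a simplicial vertex $x$ cannot create maximal independent sets of the "wrong" size, because $x$ is always available to patch things up. First I would fix a maximal independent set $D$ of $G'$ and split into two cases according to whether $x \in D$.

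\textbf{Case 1: $x \in D$.} I claim $D$ is already maximal independent in $G$. Indeed the only vertex of $G$ missing from $G'$ is $y$, so the only way $D$ could fail to be maximal in $G$ is if $D \cup \{y\}$ were independent in $G$; but $\{x,y\} \in E(G)$ and $x \in D$, so this is impossible. Hence $|D| = \alpha(G)$ by unmixedness of $G$.

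\textbf{Case 2: $x \notin D$.} Since $D$ is maximal in $G'$ and $x \notin D$, there must be some $z \in D$ with $\{x,z\} \in E(G)$, i.e. $z \in N_G(x)$. Now $z \neq y$ because $y \notin V(G')$. Here is where the simpliciality of $x$ enters: $N_G(x)$ is a clique, so $y$ and $z$ are both neighbors of $x$ and hence $\{y,z\} \in E(G)$. Consider $D' := (D \setminus \{z\}) \cup \{x\}$. I claim $D'$ is independent in $G$: the only potential new edges are those from $x$ to $D \setminus \{z\}$, but $N_G(x) \subseteq N_G[x]$ is a clique, so every neighbor of $x$ is adjacent to $z$; since $D$ is independent and $z \in D$, no element of $D \setminus \{z\}$ lies in $N_G(x)$, so $D'$ is independent. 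Moreover $|D'| = |D|$. Next I would upgrade $D'$ to a maximal independent set: since $z$ is the only vertex of $G'$ that could possibly have been removed while keeping something addable, I need to argue that $D'$ is in fact maximal in $G$. Any $w \in V(G) \setminus D'$ with $D' \cup \{w\}$ independent must satisfy $w \notin N_G(x)$ (as $x \in D'$) and $w \neq z$ would contradict maximality of $D$ in $G'$ after noting $w \in V(G')$ and $D \setminus \{z\} \subseteq D'$... the bookkeeping here is the delicate part. Concretely: if $w \in V(G')$ and $w \notin D$, then since $D$ was maximal in $G'$, $D \cup \{w\}$ has an edge, say to some $v \in D$; if $v \neq z$ then $D' \cup \{w\}$ also has that edge, contradiction; if $v = z$ then $w \in N_G(z)$, but $w \notin N_G(x)$ gives no immediate contradiction, so one must instead observe $w = z$ is excluded and handle $w \in D \cap D'$ trivially, concluding $D'$ is maximal. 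Thus $|D| = |D'| = \alpha(G)$.

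\textbf{Main obstacle.} The routine-looking but genuinely fiddly step is Case 2: verifying that the swapped set $D' = (D\setminus\{z\})\cup\{x\}$ is not merely independent but \emph{maximal} independent in $G$, i.e. that replacing $z$ by the simplicial vertex $x$ does not leave room to add a further vertex. The clique structure of $N_G(x)$ is exactly what rules this out, and getting the neighbor-bookkeeping clean — every vertex one might want to add to $D'$ is already blocked either by membership of $x$ (if it is a neighbor of $x$) or by the original maximality of $D$ in $G\setminus\{y\}$ (otherwise) — is where the real content lies. Everything else reduces to the observation that $x$ is adjacent to $y$ together with unmixedness of $G$.
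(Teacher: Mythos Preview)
Your Case 1 is fine, and in Case 2 you correctly locate a vertex $z\in D\cap N_G(x)$ and observe (via simpliciality of $x$) that $z$ is adjacent to $y$. At that moment you are done: $D$ is maximal in $G'=G\setminus\{y\}$ and contains a neighbor of $y$, so $D\cup\{y\}$ is not independent, hence $D$ is already a maximal independent set of $G$ and $|D|=\alpha(G)$. This is exactly what the paper does (phrased as: every maximal independent set of $G\setminus\{y\}$ must contain some $z\in N_G(y)$, else one could add $x$; then that set is maximal in $G$).

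Instead you detour through the swap $D'=(D\setminus\{z\})\cup\{x\}$ and try to prove $D'$ is maximal in $G$, and this is where the argument breaks. Take $w\in V(G')\setminus D$ whose \emph{only} neighbor in $D$ is $z$, and with $w\notin N_G(x)$. Such a $w$ is not excluded by anything you have assumed: $z$ can certainly have neighbors outside the clique $N_G[x]$. Then $w$ is adjacent to no vertex of $D\setminus\{z\}$, and $w$ is not adjacent to $x$, so $D'\cup\{w\}$ is independent and $D'$ is not maximal. You actually flag this case yourself (``if $v=z$ \ldots gives no immediate contradiction'') but the sentence that follows --- about excluding $w=z$ and handling $w\in D\cap D'$ --- does not address it: you are in the subcase $w\notin D$, so neither remark applies. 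The ``main obstacle'' you identify is real, and the swap does not overcome it.

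The fix is to drop the swap entirely: after producing $z\in D$ with $\{z,y\}\in E(G)$, conclude $D$ is maximal in $G$ directly. Note also that your two cases then merge into the paper's single observation, since in Case 1 the vertex $x\in D$ is itself a neighbor of $y$.
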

\begin{proof}
Let $A$ be a maximal independent set of $G\setminus \{y\}$. It is enough to show that the cardinality of $A$ is equal to $\alpha(G)$. Suppose that $A$ does not contain any vertex $z\in N_G(y)$. Since $N_G(x)\subseteq N_G(y)$, we conclude that $A\cup \{x\}$ is an independent set of $G\setminus \{y\}$, which is contradiction. Thus we can assume that $A$ contains a vertex $z\in N_G(y)$. Then $A$ is a maximal independent set of $G$ and so $\mid A\mid=\alpha(G)$.
\end{proof}

Let $C_m$ be a cycle of length $m$ and $x$ be a vertex of $C_m$. One can easily check that $C_m\setminus\{x\}$ is unmixed if and only if $m\in \{3,5\}$. Combining this observation with  Propositions \ref{thm:VD} and \ref{thm:UN} we conclude the following results.

\begin{thm} \label{thm:MAIN1}
Let $G$ be a simple graph on a vertex set $V(G)=\{x_{11}, \ldots, x_{n1}\}$ and suppose that $G$ has no isolated vertex. Assume that $C_1, \ldots, C_m$ $(0\leq m \leq n)$ are cycle graphs and $G_{m+1}, \ldots, G_n$ are connected chordal graphs with at least two vertices. Set $G'=G(C_1, \ldots, C_m, G_{m+1}, \ldots, G_n)$ and assume that for every $i=m+1, \ldots, n$ the graph $G_i$ is attached to $G$ in a neighbor of a simplicial vertex. Then the following conditions are equivalent:
\begin{itemize}
\item[(1)] $G'$ is unmixed;
\item[(2)] $G'$ is Cohen--Macaulay;
\item[(3)] $G'$ is unmixed and shellable;
\item[(4)] $G'$ is unmixed and vertex decomposable;
\item[(5)] $C_i\in\{C_3,C_5\}$ for every $i=1, \ldots, m$ and the graph $G_i$ is unmixed for every $i=m+1, \ldots, n$.
\end{itemize}
\end{thm}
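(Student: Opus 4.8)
The plan is to establish the chain of implications $(5)\Rightarrow(4)\Rightarrow(3)\Rightarrow(2)\Rightarrow(1)\Rightarrow(5)$, since $(4)\Rightarrow(3)\Rightarrow(2)$ are the standard implications recalled in Section~\ref{sec1}, and $(2)\Rightarrow(1)$ is the well-known fact that Cohen--Macaulay graphs are unmixed. Thus the real content is to prove $(5)\Rightarrow(4)$ and $(1)\Rightarrow(5)$, and these are exactly where Propositions~\ref{thm:VD}, \ref{thm:UN} and \ref{prop:CHORD} come into play together with the elementary observation that $C_m\setminus\{x\}$ is unmixed precisely when $m\in\{3,5\}$ and that $C_m$ is vertex decomposable precisely when $m\in\{3,5\}$ (by \cite[Theorem 4.1]{FVT}).

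For $(5)\Rightarrow(4)$: assume $C_i\in\{C_3,C_5\}$ for all $i\le m$ and each $G_i$ ($i>m$) is unmixed. First I would check unmixedness of $G'$ via Proposition~\ref{thm:UN}: we must verify that each summand graph and the summand graph with its attaching vertex removed are unmixed. For a cycle $C_i\in\{C_3,C_5\}$ this holds since $C_3,C_5$ are unmixed and $C_3\setminus\{x\}=K_2$, $C_5\setminus\{x\}=P_4$ are unmixed (the ``easily checked'' observation stated just before the theorem). For the chordal pieces $G_i$ ($i>m$), $G_i$ is unmixed by hypothesis, and since $G_i$ is chordal, attached to $G$ in a neighbor $x_{i1}$ of a simplicial vertex, Proposition~\ref{prop:CHORD} gives that $G_i\setminus\{x_{i1}\}$ is unmixed. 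Hence Proposition~\ref{thm:UN} applies and $G'$ is unmixed. For vertex decomposability I would invoke Proposition~\ref{thm:VD}(i): each $C_i\in\{C_3,C_5\}$ is vertex decomposable with \emph{every} vertex a shedding vertex (as $C_3,C_5$ are even ``well-covered'' vertex decomposable; more precisely one checks directly, or cites \cite{FVT}, that each vertex of $C_3$ and $C_5$ is shedding), so the attaching vertex $x_{i1}$ is shedding; each chordal $G_i$ is vertex decomposable by \cite[Corollary 7]{W}, and the attaching vertex $x_{i1}$, being a neighbor of a simplicial vertex, is a shedding vertex of $G_i$ by the same result of Woodroofe. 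Therefore Proposition~\ref{thm:VD}(i) yields that $G'$ is vertex decomposable, and together with unmixedness this gives~(4).

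For $(1)\Rightarrow(5)$: assume $G'$ is unmixed. By Proposition~\ref{thm:UN}, each $C_i$ and each $C_i\setminus\{x_{i1}\}$ (for $i\le m$) is unmixed, and the elementary observation that $C_m\setminus\{x\}$ is unmixed only for $m\in\{3,5\}$ forces $C_i\in\{C_3,C_5\}$ for every $i=1,\dots,m$; likewise each $G_i$ ($i>m$) is unmixed by Proposition~\ref{thm:UN}. This is~(5). Finally $(2)\Rightarrow(1)$ and the implications $(4)\Rightarrow(3)\Rightarrow(2)$ are immediate from Section~\ref{sec1}, closing the cycle. The main obstacle I anticipate is purely bookkeeping: making sure the hypothesis ``$G$ has no isolated vertex'' is genuinely used (it is needed for the converse direction of Proposition~\ref{thm:UN}, so that $G_i\setminus\{x_{i1}\}$ unmixed is forced) and confirming the shedding-vertex claims for $C_3$ and $C_5$ and for chordal graphs — for the latter one should cite Woodroofe's result that a neighbor of a simplicial vertex is a shedding vertex, and for $C_3$, $C_5$ one verifies condition (2) of the shedding-vertex definition directly (for any vertex $x$ and any independent set $S$ of $C_m\setminus N[x]$, some neighbor of $x$ extends $S$), which is a short finite check.
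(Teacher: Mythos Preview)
Your proposal is correct and follows exactly the route the paper intends: the paper does not write out a proof of Theorem~\ref{thm:MAIN1} at all, but simply states (just before the theorem) that it follows by combining the observation on $C_m\setminus\{x\}$ with Propositions~\ref{thm:VD} and~\ref{thm:UN}; your argument is precisely that combination (together with Proposition~\ref{prop:CHORD} for the chordal pieces) spelled out in detail. One cosmetic remark: for the shedding-vertex claim on $C_3$ and $C_5$ you can shorten the ``finite check'' by noting that cycles are vertex-transitive, so the existence of one shedding vertex (guaranteed by vertex decomposability) forces every vertex to be shedding.
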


\begin{thm} \label{thm:MAIN2}
Let $G$ be a simple graph on a vertex set $V(G)=\{x_{11}, \ldots, x_{n1}\}$ and suppose that $G$ has no isolated vertex. Assume that $C_1, \ldots, C_m$ $(0\leq m \leq n)$ are cycle graphs and $G_{m+1}, \ldots, G_n$ are connected chordal graphs with at least two vertices. Set $G'=G(C_1, \ldots, C_m, G_{m+1}, \ldots, G_n)$ and assume that for every $i=m+1, \ldots, n$ the graph $G_i$ is attached to $G$ in a neighbor of a simplicial vertex. Then the following conditions are equivalent:
\begin{itemize}
\item[(1)] $G'$ is sequentially Cohen--Macaulay;
\item[(2)] $G'$ is shellable;
\item[(3)] $G'$ is vertex decomposable;
\item[(4)] $C_i\in\{C_3,C_5\}$ for every $i=1, \ldots, m$.
\end{itemize}
\end{thm}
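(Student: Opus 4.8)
The plan is to establish the cycle of implications (3) $\Rightarrow$ (2) $\Rightarrow$ (1) $\Rightarrow$ (4) $\Rightarrow$ (3). The implications (3) $\Rightarrow$ (2) $\Rightarrow$ (1) are nothing but the general chain recalled in Section \ref{sec1} (vertex decomposable $\Rightarrow$ shellable $\Rightarrow$ sequentially Cohen--Macaulay), so all the actual content lies in (1) $\Rightarrow$ (4) and (4) $\Rightarrow$ (3). For (1) $\Rightarrow$ (4): if $G'$ is sequentially Cohen--Macaulay, then Remark \ref{link} --- applied to the full list $C_1, \ldots, C_m, G_{m+1}, \ldots, G_n$ --- forces each attached graph, in particular each cycle $C_i$, to be sequentially Cohen--Macaulay, and the Francisco--Van Tuyl classification \cite[Theorem 4.1]{FVT} then gives $C_i \in \{C_3, C_5\}$ for $i = 1, \ldots, m$, which is exactly (4). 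Note that (4) imposes nothing on $G_{m+1}, \ldots, G_n$, since a connected chordal graph is automatically vertex decomposable, hence sequentially Cohen--Macaulay, so there is nothing to extract from that part.

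For (4) $\Rightarrow$ (3) the strategy is to feed the hypotheses of Proposition \ref{thm:VD}(i): I need every attached graph to be vertex decomposable with its attaching vertex a shedding vertex. For $i = m+1, \ldots, n$ this is immediate from Woodroofe \cite[Corollary 7]{W} together with the standing assumption that $G_i$ is a connected chordal graph attached at a neighbor of a simplicial vertex. For $i = 1, \ldots, m$, condition (4) makes $C_i$ equal to $C_3$ or $C_5$, which is vertex decomposable \cite[Theorem 4.1]{FVT}; so it remains to check that $x_{i1}$ is a shedding vertex. For $C_3 = K_3$ every vertex is simplicial, hence a neighbor of a simplicial vertex, so \cite[Corollary 7]{W} applies. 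For $C_5$ I would check the definition directly: for any vertex $x$, the graph $C_5 \setminus N_{C_5}[x]$ is a single edge and $C_5 \setminus \{x\}$ is the path on four vertices, both chordal and hence vertex decomposable, and for each of the three independent sets of that edge one of the two neighbors of $x$ plainly completes it to an independent set of $C_5 \setminus \{x\}$; by the cyclic symmetry of $C_5$ every vertex is a shedding vertex. With these verifications in hand, Proposition \ref{thm:VD}(i) yields that $G'$ is vertex decomposable.

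The only step that is not routine bookkeeping is the shedding-vertex verification for $C_5$: because $C_5$ has no simplicial vertex, Woodroofe's criterion is unavailable and one has to return to the definition, though this amounts to only a short finite check. I would also remark in passing that the hypothesis that $G$ has no isolated vertex, which was essential for Theorem \ref{thm:MAIN1}, plays no role in the argument for Theorem \ref{thm:MAIN2}.
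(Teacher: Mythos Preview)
Your proposal is correct and follows exactly the paper's route: the same cycle $(3)\Rightarrow(2)\Rightarrow(1)\Rightarrow(4)\Rightarrow(3)$, with $(1)\Rightarrow(4)$ via Remark~\ref{link} and the cycle classification, and $(4)\Rightarrow(3)$ via Proposition~\ref{thm:VD}(i). The only difference is expository: you spell out the shedding-vertex verification for $C_5$ and the observation about isolated vertices, whereas the paper leaves the former implicit (it is effectively absorbed into Corollaries~\ref{cor:KCPVD} and~\ref{cor:CVD}) and does not comment on the latter.
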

\begin{proof}
The implications $(3)\Rightarrow(2)\Rightarrow(1)$ are always true. The implication $(4)\Rightarrow(3)$ follows from Proposition \ref{thm:VD} and the implication $(1)\Rightarrow(4)$ follows from Remark \ref{link} and using this fact that a cycle graph is sequentially Cohen--Macaulay if and only if it is $C_3$ or $C_5$.
\end{proof}

\par \vspace{2mm}

\end{document}